\def\CC{{\mathbb C}}
\def\PP{{\mathbb P}}
\def\QQ{{\mathbb Q}} 
\def\RR{{\mathbb R}}
\def\ZZ{{\mathbb Z}}
\newcommand\ssm{\smallsetminus}
\def\Acal{{\mathcal A}}
\def\Fcal{{\mathcal F}}
\def\Ical{{\mathcal I}}
\def\Tcal{{\mathcal T}}
\def\bb{{\mathit bb}}
\def\pr{{\rm pr}}
\def\la{\langle}
\def\ra{\rangle}
\def\bs{\backslash}
\def\half{\frac{1}{2}}
\def\pt{{\scriptscriptstyle\bullet}}
\newcommand\lognrm{\operatorname{lgnm}}
\newcommand\GL{\operatorname{GL}}
\newcommand\SL{\operatorname{SL}}
\newcommand\Sp{\operatorname{Sp}}
\newcommand\Spcal{\operatorname{{\mathcal S}\!{\mathit p}}}
\newcommand\SLcal{\operatorname{{\mathcal S}\!{\mathcal L}}}
\newcommand\sym{\operatorname{Sym}}
\newcommand\Hom{\operatorname{Hom}}
\newtheorem{theorem}{Theorem}[section] 
\newtheorem{lemma}[theorem]{Lemma}
\newtheorem{corollary}[theorem]{Corollary}
\theoremstyle{definition}
\theoremstyle{remark} \newtheorem{remark}[theorem]{Remark}
\begin{document}

%%%%%%%%%%%%%%%%%%%%%%%%%%%%%%%%%%%%%%%%%%%%%%%%%
\keywords{Satake compactification, stable cohomology, mixed Hodge structure}
\def\subjclassname{\textup{2010} Mathematics Subject Classification}
\subjclass[2010]{14G35, 32S35}

\title{The stable cohomology of the Satake compactification of $\Acal_g$}

\author{Jiaming Chen}
\address{Universit\'e Pierre et Marie Curie, 4 Place Jussieu, 75005 Paris} 
\email{jiaming.chenl@gmail.com}
\author{Eduard Looijenga}
\address{Yau Mathematical Sciences Center Tsinghua University Beijing (China)}
\email{eduard@math.tsinghua.edu.cn}

\begin{abstract}
Charney and Lee \cite{charneylee:satake}  have shown that the rational cohomology of the Satake-Baily-Borel  compactification $\Acal_g^\bb$ of $\Acal_g$ stabilizes as $g\to\infty$ and they computed this stable cohomology as a Hopf algebra.  We give a relatively simple  algebro-geometric proof of their theorem and show that this stable cohomology comes with a mixed Hodge structure of which we determine the Hodge numbers. 
We find that the mixed Hodge structure on the primitive cohomology in degrees $4r+2$ with $r\ge 1$ is an  extension of $\QQ (-2r-1)$ by $\QQ(0)$; in particular, it is not pure.
\end{abstract}

\maketitle
\section{The theorem} 

Let $\Acal_g=\Acal_g(\CC)$ denote the coarse moduli space of principally polarized complex abelian varieties of genus $g$ endowed  with the analytic (Hausdorff) topology. Recall that the  Satake-Baily-Borel compactification $j_g:\Acal_g\subset \Acal_g^\bb$  realizes $\Acal_g$ as a Zariski open-dense subset in a normal projective variety $ \Acal_g^\bb$. Forming the product of two principally polarized abelian varieties defines a morphism of moduli spaces $\Acal_g\times \Acal_{g'}\to \Acal_{g+g'}$  which  extends to these compactifications: we have a commutative diagram
\begin{equation}\label{eqn:monoid}
\begin{CD}
\Acal_g\times \Acal_{g'} @>>> \Acal_{g+g'}\\
@V{j_g\times j_{g'}}VV  @VV{j_{g+g'}}V\\
\Acal^\bb_g\times \Acal^\bb_{g'} @>>> \Acal^\bb_{g+g'}
\end{CD}
\end{equation}
By taking $g'=1$ and choosing a point of $\Acal_1$, we get the  `stabilization maps'
\begin{equation}\label{eqn:stable}
\begin{CD}
\Acal_g@>>> \Acal_{g+1}\\
@V{j_g}VV  @VV{j_{g+1}}V\\
\Acal^\bb_g @>>> \Acal^\bb_{g+1}
\end{CD}
\end{equation}
whose homotopy type does not depend on the point we choose, for $\Acal_1$ is isomorphic to the affine line and hence connected. Since we are only concerned with homotopy classes and commutativity up to homotopy, we can for 
the definition of the map $\Acal^\bb_g\to\Acal^\bb_{g+1}$ even choose  this point to be represented by the singleton $\Acal_0$. Then
this map is a homeomorphism onto the Satake boundary (since $\Acal^\bb_1\cong\PP^1$ the maps are not just homotopic, but even induce the same map on Chow groups). We shall see that this gives rise to two Hopf algebras with a mixed Hodge structure.

Before we proceed, let us recall that $\Acal_g$ is a locally symmetric variety associated to the $\QQ$-algebraic group $\Spcal_g$ and that the $\QQ$-rank of $\Spcal_g$  is $g$. According to Borel and Serre (Cor.\ 11.4.3 of \cite{borel-serre:corners})  the virtual cohomological dimension of $\Sp (2g, \ZZ)$ equals  $\dim_\RR\Acal_g -g$. This implies that the rational cohomology of $\Acal_g$, and more generally, the cohomology of a sheaf $\Fcal$ on $\Acal_g$  defined by a representation of $\Sp (2g, \ZZ)$ on a $\QQ$-vector space vanishes in degrees  $>\dim_\RR\Acal_g -g$. Since $\Acal_g$ is an orbifold, this is via Poincar\'e-Lefschetz  equivalent  to $H^k_c(\Acal_g;\Fcal)$ being zero for $k<g$. We shall use this basic fact in the proofs of \ref {lemma:stable0} below and  of \ref{lemma:stable1}.

\begin{lemma}\label{lemma:stable0}
The stabilization maps  $\Acal_g\hookrightarrow \Acal_{g+1}$  (multiplication by a fixed elliptic curve) and 
$\Acal^\bb_g\to\Acal^\bb_{g+1}$ (mapping onto the boundary) defined above induce on rational cohomology  an isomorphism in degree $<g$ and are injective in degree $g$.
\end{lemma}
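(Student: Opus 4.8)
The plan is to prove the two stabilization statements by a parallel induction on $g$, exploiting the fact that $\Acal^\bb_{g+1}$ is covered by $\Acal_{g+1}$ together with the image of $\Acal^\bb_g$ (the Satake boundary), and that this image is closed. Write $b\colon\Acal^\bb_g\hookrightarrow\Acal^\bb_{g+1}$ for the boundary embedding (a homeomorphism onto its image) and consider the long exact sequence of the pair $(\Acal^\bb_{g+1},b(\Acal^\bb_g))$, i.e.
\begin{equation*}
\cdots\to H^k_c(\Acal_{g+1};\QQ)\to H^k(\Acal^\bb_{g+1};\QQ)\to H^k(\Acal^\bb_g;\QQ)\to H^{k+1}_c(\Acal_{g+1};\QQ)\to\cdots,
\end{equation*}
since the open complement of the boundary in $\Acal^\bb_{g+1}$ is exactly $\Acal_{g+1}$. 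The basic vanishing fact recalled just before the lemma gives $H^k_c(\Acal_{g+1};\QQ)=0$ for $k<g+1$. Feeding this into the sequence, the restriction map $H^k(\Acal^\bb_{g+1};\QQ)\to H^k(\Acal^\bb_g;\QQ)$ is an isomorphism for $k<g$ and injective for $k=g$; this is precisely the assertion for the Satake maps, and it requires no induction at all once the vanishing fact is in hand.

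For the open part, $\Acal_g\hookrightarrow\Acal_{g+1}$, I would compare the two spaces through their Satake compactifications. One has a commutative square relating the pairs $(\Acal^\bb_g,\partial\Acal^\bb_g)$ and $(\Acal^\bb_{g+1},\partial\Acal^\bb_{g+1})$, where $\partial\Acal^\bb_g$ denotes the Satake boundary of $\Acal^\bb_g$ (the image of $\Acal^\bb_{g-1}$). The key point is that the stabilization map sends $\Acal^\bb_g$ into $\Acal^\bb_{g+1}$ carrying $\partial\Acal^\bb_g$ into $\partial\Acal^\bb_{g+1}$ and $\Acal_g$ into $\Acal_{g+1}$, so one gets a map of long exact sequences of pairs. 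In the relative cohomology $H^k(\Acal^\bb_g,\partial\Acal^\bb_g)\cong H^k_c(\Acal_g)$ (again using normality and that the complement of the boundary is $\Acal_g$), and the naturality of the pair sequence then lets me bootstrap: the already-established Satake isomorphism range for $\Acal^\bb_g\to\Acal^\bb_{g+1}$ and for $\partial\Acal^\bb_g=\Acal^\bb_{g-1}\to\Acal^\bb_g=\partial\Acal^\bb_{g+1}$, combined via the five lemma, yields that $H^k_c(\Acal_g)\to H^k_c(\Acal_{g+1})$ — hence dually $H^k(\Acal_{g+1})\to H^k(\Acal_g)$ — has the stated range. Care is needed with the degree shift: the vanishing for $H^\bullet_c(\Acal_g)$ kicks in one degree lower than for $H^\bullet_c(\Acal_{g+1})$, which is exactly why one loses isomorphism at $k=g$ and only keeps injectivity there. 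A cleaner route that avoids the induction for the open part entirely: since $H^k(\Acal^\bb_{g+1};\QQ)\to H^k(\Acal^\bb_g;\QQ)$ is iso for $k<g$ and the map $\Acal_g\to\Acal_{g+1}$ factors compatibly, comparing the Gysin/restriction sequences for the open immersions $\Acal_g\subset\Acal^\bb_g$ directly transfers the range.

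The main obstacle I anticipate is bookkeeping the degree ranges precisely across the three long exact sequences (for $\Acal^\bb_{g+1}$ relative to its boundary, for $\Acal^\bb_g$ relative to its boundary, and the map between them), making sure the boundary contributions really do vanish in the claimed range and that the single-degree loss at $k=g$ is not an artifact of a sloppy estimate. A secondary technical point is justifying the identification $H^\bullet(\Acal^\bb_g,\partial\Acal^\bb_g;\QQ)\cong H^\bullet_c(\Acal_g;\QQ)$ — this is standard for the inclusion of an open subvariety with closed complement in a (locally compact, reasonable) space, so it should not cause real trouble, but it is the hinge that connects the Borel–Serre vanishing to statements about the Satake compactification. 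Everything else is formal diagram-chasing with the five lemma.
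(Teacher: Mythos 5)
Your treatment of the second assertion (the maps $\Acal^\bb_g\to\Acal^\bb_{g+1}$) is correct and is exactly the paper's argument: the long exact sequence of the pair, the identification $H^k(\Acal^\bb_{g+1},\Acal^\bb_g;\QQ)\cong H^k_c(\Acal_{g+1};\QQ)$, and the Borel--Serre vanishing $H^k_c(\Acal_{g+1};\QQ)=0$ for $k\le g$ give the isomorphism for $k<g$ and injectivity for $k=g$. No complaints there.

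The argument you propose for the first assertion ($\Acal_g\hookrightarrow\Acal_{g+1}$), however, has a genuine gap and cannot be repaired along the lines you sketch. The five-lemma comparison of the pair sequences for $(\Acal^\bb_g,\partial\Acal^\bb_g)$ and $(\Acal^\bb_{g+1},\partial\Acal^\bb_{g+1})$ only produces a statement about the relative groups, i.e.\ about $H^k_c(\Acal_{g+1};\QQ)\to H^k_c(\Acal_g;\QQ)$; in the range $k\le g-1$ both of these groups are already zero by Borel--Serre, so the conclusion is vacuous and carries no information about the ordinary cohomology of the open strata. The step ``hence dually'' is where the argument breaks: Poincar\'e duality identifies $H^k_c(\Acal_g;\QQ)$ with $H^{2d_g-k}(\Acal_g;\QQ)^\vee$ where $d_g=\dim_\CC\Acal_g=g(g+1)/2$, and since $d_g\ne d_{g+1}$ the dual statements live in different (top-range) degrees and concern Gysin (wrong-way) maps rather than the restriction maps $H^k(\Acal_{g+1};\QQ)\to H^k(\Acal_g;\QQ)$ in low degree. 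The fallback of ``comparing the Gysin/restriction sequences for the open immersions $\Acal_g\subset\Acal^\bb_g$'' fares no better: the boundary has high codimension and $\Acal^\bb_g$ is singular there, so the relevant tool is the Leray spectral sequence for $j_g$ with its highly nontrivial sheaves $R^qj_{g*}\QQ$ (stalks the cohomology of the groups $P_g(r)$); there is no two-term sequence to compare. Indeed no formal argument can transfer stability from $\Acal^\bb_g$ to $\Acal_g$: the main theorem of the paper shows that $H^\pt(\Acal^\bb_\infty;\QQ)$ and $H^\pt(\Acal_\infty;\QQ)$ are genuinely different Hopf algebras, and the paper's analysis of the spectral sequence relating them \emph{uses} stability of $H^\pt(\Acal_g;\QQ)$ as an input. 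The first assertion is Borel's stability theorem for the arithmetic group $\Sp(2g,\ZZ)$ \cite{borel:stable1} --- a deep analytic result proved via relative Lie algebra cohomology --- and the paper's proof simply quotes it. You should do the same.
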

\begin{proof}
Recall that $\Acal_g$ is a locally symmetric variety associated to the $\QQ$-algebraic group $\Spcal_g$ and that the $\QQ$-rank of $\Spcal_g$  is $g$. The first assertion then follows from a theorem of Borel \cite{borel:stable1}.
The second stability assertion is equivalent to the vanishing of the relative cohomology 
$H^k(\Acal_{g+1}^\bb,\Acal_{g}^\bb; \QQ)$  for $k\le g$.
As this is just  $H^k_c(\Acal_{g+1}; \QQ)$, this follows from the Borel-Serre result quoted above. 
\end{proof}

We  then form the stable rational cohomology spaces  
\[
H^k(\Acal_\infty; \QQ):=\varinjlim_g H^k(\Acal_g; \QQ), \quad H^k(\Acal^\bb_\infty; \QQ):=\varinjlim_g H^k(\Acal^\bb_g; \QQ),
\]
where the notation is only suggestive, for there is here no pretense of introducing spaces $\Acal_\infty$ and $\Acal^\bb_\infty$.
If we take the direct sum over $k$  we get a  $\QQ$-algebra in either case.
It follows from the homotopy commutativity of the diagram (\ref{eqn:stable}) above that the  inclusions 
$j_g$ define a graded $\QQ$-algebra homomorphism 
\[
j_\infty^*: H^\pt(\Acal^\bb_\infty; \QQ)\to H^\pt(\Acal_\infty; \QQ).
\] 
The multiplication  maps exhibited in diagram  (\ref{eqn:monoid}) are (almost by definition) compatible with the stabilization maps  and hence induce
a graded coproduct on either algebra so that 
$j_\infty^*$ becomes a homomorphism of (graded bicommutative) Hopf algebras. Since the multiplication maps and the stability 
maps are morphisms in the category of complex algebraic varieties, these Hopf algebras come with a natural mixed Hodge structure such that 
$j_\infty^*$ is also a morphism in the mixed Hodge category. The Hopf algebra $H^\pt(\Acal_\infty; \QQ)$ is well-known and due to Borel 
\cite{borel:stable1}: it has as its primitive elements classes $ch_{2r+1}\in H^{4r+2}(\Acal_\infty; \QQ)$, $r\ge 0$, where $ch_{2r+1}$ restricts to  
$\Acal_g$ as the rational $(2r+1)$th Chern character of the Hodge bundle on $\Acal_g$,  and so 
$H^\pt(\Acal_\infty; \QQ)=\QQ[ch_1, ch_3, ch_5, \dots ]$ with $ch_{2r+1}$ of type $(2r+1,2r+1)$ (if we are happy with multiplicative generators, we can just as well replace  $ch_{2r+1}$ by the corresponding Chern class $c_{2r+1}$, for  $c_{2r+1}$ is expressed universally in
$[ch_1, ch_3, ch_5,\dots ch_{2r+1}$ and vice versa). The principal and essentially only result of this paper is Theorem \ref{thm:main} below. Its first assertion is due Charney and Lee \cite{charneylee:satake}, who derive this from a determination of a limit of homotopy types. We shall obtain this in a relatively  elementary manner  by means of algebraic geometry and the  classical vanishing results of Borel and Borel-Serre. Our approach has the advantage that it helps us to understand the new classes that appear here geometrically,  to the extent that this enables us to determine their Hodge type. We address the homotopy discussion of Charney and Lee and a  generalization thereof in another paper \cite{chen-looijenga} that will not be used here.

\begin{theorem}\label{thm:main}
The graded Hopf algebra  $H^\pt(\Acal^\bb_\infty; \QQ)$ has for every integer $r\ge 1$ a primitive generator $y_r$  of degree $4r+2$ 
and for every integer  $r\ge 0$ a primitive generator $\widetilde{ch}_{2r+1}$ of degree $4r+2$ such that the map
$j_\infty^*: H^\pt(\Acal^\bb_\infty; \QQ)\to H^\pt(\Acal_\infty; \QQ)$ sends $\widetilde{ch}_{2r+1}$ to $ch_{2r+1}$ and is zero on $y_r$ when $r\ge 1$. In particular, if $\tilde c_{2r+1}\in H^{4r+2}(\Acal^\bb_\infty; \QQ)$ denotes the lift of $c_{2r+1}\in H^{4r+2}(\Acal_\infty; \QQ)$ that is obtained from our choice of the $\widetilde{ch}_{1}, \dots ,\widetilde{ch}_{2r+1}$ (as a universal polynomial in these classes), then $H^\pt(\Acal^\bb_\infty; \QQ)=\QQ[y_1, y_2,y_3\dots, \tilde c_1, \tilde c_3,\tilde c_5,\dots ]$ as a commutative $\QQ$-algebra.

The mixed Hodge structure on $H^\pt(\Acal^\bb_\infty; \QQ)$ is such that $y_r$ is of bidegree $(0,0)$ and 
$\widetilde{ch}_{2r+1}$ (or equivalently, $\tilde c_{2r+1}$) is of bidegree $(2r+1,2r+1)$.
\end{theorem}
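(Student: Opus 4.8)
The strategy is to compute the stable cohomology of $\Acal^\bb_g$ by studying the long exact sequence of the pair $(\Acal^\bb_g,\Acal^\bb_{g-1})$, where $\Acal^\bb_{g-1}$ is the Satake boundary, together with the local structure of $\Acal^\bb_g$ along this boundary. The key geometric input is that a neighbourhood of the boundary stratum $\Acal_{g-1}^\bb$ inside $\Acal^\bb_g$ is, up to homotopy, the total space of a fibration whose fibre is related to the link of the boundary; by the theory of toroidal and Satake compactifications this link is built out of arithmetic quotients attached to the parabolic stabilising a rank-one isotropic subspace, i.e.\ essentially to $\Spcal_{g-1}\ltimes(\text{Heisenberg})$ acting on a Siegel upper half space of genus $g-1$ bundled over a contractible cone of symmetric bilinear forms. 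First I would set up, for each $g$, the excision/Gysin-type exact sequence relating $H^\pt(\Acal^\bb_g)$, $H^\pt(\Acal^\bb_{g-1})$ and $H^\pt_c$ of the open part $\Acal_g\cup(\text{lower strata removed})$; passing to the limit $g\to\infty$ using Lemma~\ref{lemma:stable0} turns this into a clean exact sequence of stable Hopf algebras.

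Next I would identify the "new" contribution. The complement $\Acal^\bb_g\ssm\Acal^\bb_{g-1}$ retracts onto $\Acal_g$ is false in general, so instead I would filter $\Acal^\bb_g$ by rank of the boundary and analyse one step at a time: the pair $(\Acal^\bb_g, \Acal^\bb_{g-1})$ has relative cohomology equal to $H^\pt_c$ of a tubular neighbourhood of $\Acal_{g-1}^\bb$ minus that locus, which fibres over $\Acal_{g-1}^\bb$ with fibre the punctured cone/link; this link carries the cohomology of a lattice-quotient of a Heisenberg-type group, whose stable cohomology I expect to be an exterior/polynomial algebra on a single new class in each degree $4r+2$, $r\ge 1$ — this is where the generators $y_r$ come from. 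Borel's stability and the Borel–Serre vanishing (the $H^k_c(\Acal_g)=0$ for $k<g$ quoted above) are exactly what is needed to show the connecting maps in the long exact sequence vanish stably in the relevant range, so that the sequence splits into short exact sequences $0\to(\text{new classes})\to H^\pt(\Acal^\bb_\infty)\to H^\pt(\Acal_\infty)\to 0$ after accounting for the section provided by the $\widetilde{ch}_{2r+1}$.

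For the splitting and the Hopf-algebra structure: the map $j_\infty^*$ is surjective because the Chern character classes $ch_{2r+1}$ of the Hodge bundle extend to $\Acal^\bb_g$ — indeed the Hodge bundle itself extends as a coherent (even locally free in the orbifold sense, after passing to a toroidal resolution and pushing forward) sheaf on the Satake compactification, giving canonical lifts $\widetilde{ch}_{2r+1}$; alternatively one pulls back from the toroidal compactification where the extension is standard. A compatible choice of these lifts is multiplicative, so one gets a subalgebra $\QQ[\tilde c_1,\tilde c_3,\dots]\subset H^\pt(\Acal^\bb_\infty)$ mapping isomorphically onto $H^\pt(\Acal_\infty)$, and the kernel, identified above with the link contribution, is the polynomial algebra $\QQ[y_1,y_2,\dots]$; bicommutativity of the Hopf structure plus the structure theorem for graded-commutative Hopf algebras over $\QQ$ forces the total algebra to be the tensor product $\QQ[y_r]\otimes\QQ[\tilde c_{2r+1}]$, with the $y_r$ and $\widetilde{ch}_{2r+1}$ primitive. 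Finally, the Hodge types: $\widetilde{ch}_{2r+1}$ is of type $(2r+1,2r+1)$ because it is a Chern class of an algebraic vector bundle; for $y_r$ one reads the bidegree off the geometric model of the link — the new class is pulled back from the (quotient of the) Heisenberg nilmanifold direction, which carries the trivial/weight-zero Hodge structure $\QQ(0)$, whence bidegree $(0,0)$.

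**The main obstacle.** The hard part will be the precise identification of the stable cohomology of the link of the Satake boundary — i.e.\ showing that the "new" part contributes exactly one primitive class $y_r$ in each degree $4r+2$ for $r\ge1$ and nothing else, and pinning down that this class has Hodge–Tate weight $0$ rather than, say, $2r+1$. This requires a careful analysis of the mixed Hodge structure on the cohomology of the relevant arithmetic quotient (a torus- or nilmanifold-bundle over $\Acal_{g-1}$), and controlling the extension class is what yields the non-purity statement advertised in the abstract; the topological/Hopf-algebra bookkeeping around it is comparatively routine given Borel's and Borel–Serre's theorems.
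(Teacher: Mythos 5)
Your overall architecture (stratify the boundary, compute the cohomology of the links via parabolic-type arithmetic groups, then do Hopf-algebra bookkeeping with Borel and Borel--Serre as input) has the right shape, but the two steps you yourself flag as the crux are wrong in ways that would sink the argument. First, the source of the new classes. The group attached to a stratum $\Acal_r\subset\Acal^\bb_g$ is an extension of $\GL(g-r,\ZZ)$ by a Heisenberg-type nilpotent group, and the point of the paper's key computation (Lemma~\ref{lemma:stable2}) is that the nilpotent part contributes \emph{nothing} stably: by Borel's vanishing for nontrivial coefficients and classical invariant theory, the cohomology of the nilpotent kernel has no $\SL^{\pm}(g-r)$-invariants in low positive degree, so the stalk of $R^qj_{g*}\QQ$ is stably $H^q(\GL(\ZZ);\QQ)$, Borel's exterior algebra on classes $a_r$ in degree $4r+1$. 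The classes $y_r$ are then the transgressions $d^{4r+2}(1\otimes a_r)$ in the Leray spectral sequence of $j_g$ --- they are not cohomology of a nilmanifold sitting in degree $4r+2$. Your one-stratum-at-a-time filtration makes this worse: the link of the top boundary stratum involves only $\GL(1,\ZZ)$, which is rationally trivial, so peeling off one codimension of boundary at a time you would never see the $a_r$; they live on the deep strata where $g-r$ is large. Relatedly, the pair $(\Acal^\bb_g,\Acal^\bb_{g-1})$ only yields stability, since its relative cohomology is $H^\pt_c(\Acal_g;\QQ)$, which vanishes stably by Borel--Serre; the pair that detects the new classes is $(\Acal^\bb_g,\Acal_g)$, and computing it amounts to the Leray spectral sequence analysis over all boundary strata that your plan tries to bypass.

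Second, the lifts $\widetilde{ch}_{2r+1}$. The Hodge bundle does not extend as a vector bundle over the Satake compactification, and pushing forward from a toroidal compactification gives a coherent sheaf on a singular variety whose Chern-type classes do not canonically lift the $ch_{2r+1}$; this is precisely why the Goresky--Pardon lifts exist only in real cohomology, and why the paper stresses (Remark~\ref{rem:}) that the $\tilde c$-classes are \emph{not} canonical --- the nonvanishing of the Tate extension of Remark~\ref{rem:tate1} shows that no canonical rational splitting exists. In the paper, the existence of the lifts is instead read off from the limit spectral sequence: the primitive classes $ch_s$ on the $q=0$ row must be hit. Finally, your weight argument for $y_r$ (``the nilmanifold direction carries $\QQ(0)$'') rests on the same misidentification of where the class lives; the actual proof requires a toroidal resolution of a neighbourhood of the worst cusp and Deligne's theorem that classes pulled back from the nerve of the normal crossing exceptional divisor have weight zero.
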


\begin{remark}\label{rem:tate1}
So for $r\ge 1$, the primitive part $H^{4r+2}_\pr(\Acal_\infty^\bb;\QQ)$ of the  Hopf algebra  $H^\pt(\Acal^\bb_\infty; \QQ)$ is two-dimensional in degree $4r+2$ and defines 
a Tate extension 
\[
0\to\QQ\to H^{4r+2}_\pr(\Acal_\infty^\bb;\QQ)\to \QQ(-2r-1)\to 0,
\]
with $\QQ$ spanned by $y_r$ and $\QQ(-2r-1)$ spanned by $ch_{2r+1}$. We discuss the nature of this extension briefly in Remark
\ref{rem:tate2} below.
\end{remark}

We thank the referee for helpful comments on an earlier version. These led to an improved  exposition.

\section{Determination of the stable cohomology as a Hopf algebra}
According to \cite{faltingschai} Ch.\ V, Thm.\ 2.3 part (3), $\Acal^\bb_g\ssm \Acal_g$ is as a variety isomorphic to $\Acal^\bb_{g-1}$. In particular,  we have a partition into locally closed subvarieties: $\Acal^\bb_g=\Acal_g\sqcup \Acal_{g-1}\sqcup\cdots \sqcup\Acal_0$.

We will use the fact that the higher direct images  
$R^\pt j_{g*}\QQ_{\Acal_g}$ are locally constant on each stratum $\Acal_r$. Each point of $\Acal_r$ has a neighborhood basis whose members  meet $\Acal_g$ in a virtual classifying space of an  arithmetic group $P_g(r)$ defined below (for a more detailed discussion we refer to \cite{looijenga}, Example 3.5; see also Section 4 of \cite{chen-looijenga}), so that $R^\pt j_{g*}\QQ_{\Acal_g}$ can be identified with the rational cohomology of $P_g(r)$.

Let $H$ stand for  $\ZZ^2$ (with basis denoted $(e, e')$) and
endowed with the symplectic form characterized by  $\la e, e'\ra=1$. We also put $I:=\ZZ e$. We regard $H^g$  as a direct sum of symplectic lattices with $g$ summands.
In terms of the decomposition $H^g= H^r\oplus H^{g-r}$,  $P_g(r)$ is the group of symplectic transformations 
in $H^g$ that are the identity on $H^r\oplus 0$ and preserve $H^r\oplus I^{g-r}$. The orbifold fundamental group of $\Acal_r$ is isomorphic to 
$\Sp(H^r)$ (the isomorphism is of course given up to conjugacy) and its representation on a  stalk of $R^\pt j_{g*}\QQ_{\Acal_g}|\Acal_r$ corresponds  to its  obvious action (given by conjugation) on $P_g(r)$.  Note that this action is algebraic in the sense that
it extends to a representation of the underlying affine algebraic group (which assigns to a commutative ring  $R$ the group $\Sp (H^r\otimes R)$).  If $p\in \Acal_r$ and $U_p$ is a regular neighborhood of
$p$ in $\Acal^\bb_g$ such that the natural map $H^\pt(U_p\cap \Acal_g; \QQ)\to (R^\pt j_{g*}\QQ_{\Acal_g})_p$ is an isomorphism, then for every 
$r\le s\le g$  and $q\in U_p\cap \Acal_s$ the restriction map yields a map of $\QQ$-algebras  $(R^\pt j_{g*}\QQ_{\Acal_g})_p\to
(R^\pt j_{g*}\QQ_{\Acal_g})_q$.  Under the above identification this is represented  by the $\Sp (H^r)$-orbit of the 
obvious inclusion $P_g(s)\hookrightarrow P_g(r)$. Similarly,  the restriction to $\Acal_r\times\Acal_{r'}\subset \Acal_g\times\Acal_{g'}$ of the
natural sheaf homomorphism 
\[
R^\pt j_{g+g'{}*}\QQ_{\Acal_{g+g'}}|\Acal^\bb_g\times\Acal^\bb_{g'}\to R^\pt(j_g\times j_{g'})_*\QQ_{\Acal_g\times \Acal_{g'}} \cong 
R^\pt j_{g*}\QQ_{\Acal_g}\boxtimes R^\pt j _{g'{}*}\QQ_{\Acal_{g'}}
\]
(we invoked the  K\"unneth isomorphism) is  induced by the obvious embedding $P_g(r)\times  P_{g'}(r')\hookrightarrow P_{g+g'}(r+r')$, or rather its $\Sp (H^{r+r'})$-orbit. 

The proof of  the first assertion of our main theorem rests on careful study of  the Leray spectral sequence for the inclusion  $j_g: \Acal_g\subset \Acal_g^\bb$,
\begin{equation}\label{eqn:ss}
E^{p,q}_2=H^p(\Acal_g^\bb, R^qj_{g*}\QQ)\Rightarrow H^{p+q}(\Acal_g; \QQ).
\end{equation}
Such a spectral sequence can be set up in the category of mixed Hodge modules
(see \cite{saito:1}), so that  this is in fact  a spectral sequence of mixed Hodge structures.  

\begin{lemma}\label{lemma:stable1}
Let $r\le g$. Then the natural map $H^p(\Acal_g^\bb, R^\pt j_{g*}\QQ)\to H^p(\Acal_r^\bb, R^\pt j_{g*}\QQ)$ is an isomorphism for $p<r$ and is injective for $p=r$.
\end{lemma}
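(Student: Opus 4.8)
The plan is to recognize the map of the lemma as the boundary--restriction arrow in the long exact sequence attached to the stratification $\Acal_g^\bb=\Acal_g\sqcup\cdots\sqcup\Acal_0$, and then to kill the complementary, compactly supported terms in the stated range by peeling off the strata $\Acal_s$ with $r<s\le g$ one at a time, invoking at each step the Borel and Borel--Serre vanishing recalled in the introduction. It suffices to treat each cohomology sheaf $R^qj_{g*}\QQ$ separately, so fix $q$ and abbreviate $\Fcal:=R^qj_{g*}\QQ$, a sheaf on $\Acal_g^\bb$.

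First I would set up the reduction. Let $i\colon\Acal_r^\bb\hookrightarrow\Acal_g^\bb$ be the closed embedding onto the boundary stratum and $j\colon V_r:=\Acal_g^\bb\ssm\Acal_r^\bb\hookrightarrow\Acal_g^\bb$ the complementary open embedding, so that $V_r=\Acal_g\sqcup\Acal_{g-1}\sqcup\cdots\sqcup\Acal_{r+1}$. Applying $R\Gamma(\Acal_g^\bb,-)$ to the triangle $j_!j^*\Fcal\to\Fcal\to i_*i^*\Fcal\xrightarrow{+1}$ and using that $\Acal_g^\bb$ is compact yields a long exact sequence
\[
\cdots\to H^p_c(V_r,\Fcal|_{V_r})\to H^p(\Acal_g^\bb,\Fcal)\to H^p(\Acal_r^\bb,\Fcal|_{\Acal_r^\bb})\to H^{p+1}_c(V_r,\Fcal|_{V_r})\to\cdots
\]
whose middle arrow is the natural restriction map. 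Hence it is enough to prove that $H^p_c(V_r,\Fcal|_{V_r})=0$ for all $p\le r$: the sequence then forces an isomorphism for $p<r$ (as also $p+1\le r$) and injectivity for $p=r$.

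Next I would peel off the strata of $V_r$. For each $s$ with $r<s\le g$, the stratum $\Acal_s$ is open in $\Acal_s^\bb\ssm\Acal_r^\bb$ with closed complement $\Acal_{s-1}^\bb\ssm\Acal_r^\bb$, and $\Acal_g^\bb\ssm\Acal_r^\bb=V_r$; running $s$ from $r+1$ up to $g$ and using the long exact sequence for compactly supported cohomology of each (open, closed) pair reduces the required vanishing to the assertion that $H^p_c(\Acal_s,\Fcal|_{\Acal_s})=0$ for every $p\le r$ and every $s$ with $r<s\le g$. Now, by the discussion preceding the lemma, $\Fcal|_{\Acal_s}$ is the local system on the orbifold $\Acal_s$ attached to the representation of $\Sp(H^s)=\Sp(2s,\ZZ)$ on the finite-dimensional $\QQ$-vector space $H^q(P_g(s);\QQ)$, a representation that extends to the algebraic group $\Spcal_s$. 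By the Borel--Serre bound on the virtual cohomological dimension together with Poincar\'e--Lefschetz duality on $\Acal_s$ --- exactly as recalled in the introduction for $\Acal_g$ --- the compactly supported cohomology of such a local system vanishes in degrees $<s$; since $s\ge r+1>p$, this gives $H^p_c(\Acal_s,\Fcal|_{\Acal_s})=0$ for $p\le r$, completing the chain of reductions.

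The step that carries the real content, and hence the main potential obstacle, is the identification in the last paragraph: one must know that $R^qj_{g*}\QQ$ restricted to each stratum $\Acal_s$ is the local system of a genuine \emph{finite-dimensional} $\QQ$-representation of $\Sp(2s,\ZZ)$ whose action is algebraic, so that the vanishing theorems of Borel and Borel--Serre apply in each cohomological degree. This rests on the (already recalled) description of a regular neighbourhood of a boundary point as a virtual classifying space of the arithmetic group $P_g(s)$, together with the finiteness of the cohomology of arithmetic groups. By contrast, the assembly of the long exact sequences and the induction over the strata are entirely routine once that local input is in place.
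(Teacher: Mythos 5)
Your proposal is correct and follows essentially the same route as the paper: both reduce the claim via the open--closed long exact sequence to the vanishing of $H^p_c(\Acal_s, R^\pt j_{g*}\QQ)$ for $p\le r<s$, which follows from the Borel--Serre bound on virtual cohomological dimension plus duality applied to the algebraic local systems $R^qj_{g*}\QQ|\Acal_s$. The only (immaterial) difference is organizational: the paper factors the restriction through the chain $\Acal_g^\bb\supset\Acal_{g-1}^\bb\supset\cdots\supset\Acal_r^\bb$ and treats one stratum per step, whereas you remove the whole open complement $V_r$ at once and then stratify its compactly supported cohomology.
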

\begin{proof}
It suffices to show that  when $r<g$, the natural map $H^p(\Acal_{r+1}^\bb, R^\pt j_{g*}\QQ)\to H^p(\Acal_r^\bb, R^\pt j_{g*}\QQ)$ has this property.
For this we consider the exact sequence
\begin{multline*}
\cdots\to H^p_c(\Acal_{r+1},R^\pt j_{g*}\QQ)\to   H^p(\Acal_{r+1}^\bb, R^\pt j_{g*}\QQ)\to H^p(\Acal_r^\bb, R^\pt j_{g*}\QQ)\to\\\to H^{p+1}_c(\Acal_{r+1},R^\pt j_{g*}\QQ)\to\cdots
\end{multline*}
The restriction $R^q j_{g*}\QQ|\Acal_{r+1}$ is a local system whose monodromy  comes from an action of the algebraic group $\Spcal (H^r)$. Following the Borel-Serre result mentioned above, $H^i_c(\Acal_{r+1},R^\pt j_{g*}\QQ)$  vanishes for $i\le r$ and so the lemma follows.
\end{proof}

By viewing $I^{g-r}$ as the subquotient $(H^r\oplus I^{g-r})/(H^r\oplus 0)$ of $H^g$, we see that there is a natural homomorphism of arithmetic groups  $P_g(r)\to \GL (I^{g-r})=\GL (g-r,\ZZ)$.

\begin{lemma}\label{lemma:stable2}
The homomorphism  
$P_g(r)\to \GL (g-r,\ZZ)$  induces  an isomorphism on rational cohomology in degrees $<\half (g-r-1)$. In   
that range the rational cohomology of  $\GL (g-r,\ZZ)$ is stable and is canonically isomorphic to the cohomology of 
$\GL (\ZZ):=\cup_r\GL(r, \ZZ)$. The inclusion $P_g(r)\times P_{g'}(r')\subset P_{g+g'}(r+r')$  induces on rational cohomology in 
the stable range (relative to both factors) the coproduct  in the Hopf algebra $H^\pt (\GL (\ZZ); \QQ)$.
\end{lemma}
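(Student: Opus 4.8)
The plan is to recognize $P_g(r)$ as an arithmetic group that sits in a short exact sequence with $\GL(g-r,\ZZ)$ as its quotient, so that the homomorphism in the statement is the natural projection, and then to show that the kernel is "homologically trivial in a range." Concretely, $P_g(r)$ consists of the symplectic automorphisms of $H^g$ fixing $H^r\oplus 0$ pointwise and preserving $H^r\oplus I^{g-r}$; an element of the kernel of $P_g(r)\to\GL(I^{g-r})$ also acts trivially on the subquotient $I^{g-r}$, hence is unipotent, so the kernel $N$ is an arithmetic subgroup of a unipotent (in fact two-step nilpotent) $\QQ$-group — it is built from $\Hom(I^{g-r}, H^r)$ together with a $\sym^2$-type piece recording the symplectic-form bookkeeping on $I^{g-r}$. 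The first step is thus to identify $N$ precisely and to record that $P_g(r)$ is, up to finite index issues handled rationally, the semidirect product $N\rtimes \GL(g-r,\ZZ)$ (with $\Sp(H^r)$ acting as well, but that plays no role here since we only compare $P_g(r)$ with $\GL(g-r,\ZZ)$).

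The second step is to run the Hochschild–Serre spectral sequence $H^p(\GL(g-r,\ZZ); H^q(N;\QQ))\Rightarrow H^{p+q}(P_g(r);\QQ)$ and to prove that the $\GL(g-r,\ZZ)$-module $H^q(N;\QQ)$ is, for $0<q<\half(g-r-1)$, without invariants or, more usefully, that the edge map $H^p(\GL(g-r,\ZZ);\QQ)=H^p(\GL(g-r,\ZZ);H^0(N;\QQ))\to H^p(P_g(r);\QQ)$ is an isomorphism in that range. Since $N$ is nilpotent, $H^q(N;\QQ)$ is computed from the Lie algebra cohomology of $\nfrak=\Lie N$, which is a polynomial functor in the standard representation $W:=I^{g-r}\otimes\QQ$ and its dual (pieces like $\wedge^\pt(W^\vee\otimes H^r_\QQ)$ tensored with $\wedge^\pt\sym^2 W^\vee$ or similar). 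The key input is then Borel's stability and vanishing theorem for $\GL(n,\ZZ)$ with coefficients in such algebraic representations: for a nontrivial irreducible algebraic representation the cohomology vanishes in a range growing linearly in $n=g-r$, while for the trivial representation one gets the stable cohomology of $\GL(\ZZ)$. Combining this across all $q>0$ collapses the spectral sequence onto the $q=0$ row in degrees $<\half(g-r-1)$, giving the first two assertions at once (the identification with $H^\pt(\GL(\ZZ);\QQ)$ being Borel's computation together with the stability statement in that same paper).

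For the last assertion I would argue functorially. The inclusion $P_g(r)\times P_{g'}(r')\hookrightarrow P_{g+g'}(r+r')$ is compatible, via the projections to the $\GL$ quotients, with the block-diagonal inclusion $\GL(g-r,\ZZ)\times\GL(g'-r',\ZZ)\hookrightarrow\GL(g+g'-r-r',\ZZ)$; passing to the stable limit, the latter is exactly the map inducing the standard coproduct on $H^\pt(\GL(\ZZ);\QQ)$ (this is the classical fact that $\GL(\ZZ)$, via block sum, is an $H$-space whose homology Hopf algebra structure has comultiplication dual to this inclusion). Because all the identifications in the previous step were the edge homomorphisms of Hochschild–Serre spectral sequences, which are natural for maps of group extensions, the square relating restriction along $P\times P'\hookrightarrow P$ to restriction along the $\GL$'s commutes in the stable range, and the claim follows. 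The main obstacle I anticipate is the bookkeeping in the first two steps: pinning down the exact structure of the nilpotent kernel $N$ (in particular the $\sym^2 I^{g-r}$ summand coming from the constraint of preserving the symplectic form modulo $H^r$) and then verifying that every nonconstant irreducible $\GL(g-r)$-constituent appearing in $H^\pt(\nfrak;\QQ)$ indeed has Borel-vanishing range at least $\half(g-r-1)$, so that nothing leaks onto the $q=0$ row prematurely; the homotopy-theoretic interpretation of the coproduct, by contrast, should be routine once the naturality of the spectral sequences is in place.
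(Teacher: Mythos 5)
Your overall architecture coincides with the paper's: identify the kernel $N_g(r)$ of $P_g(r)\to\GL(g-r,\ZZ)$ as a two-step nilpotent arithmetic group with center $\sym_2 I^{g-r}$ and abelianization $H^r\otimes I^{g-r}$, run the Hochschild--Serre spectral sequence $H^p(\GL(g-r,\ZZ);H^q(N_g(r)))\Rightarrow H^{p+q}(P_g(r))$, invoke Borel's vanishing theorem for nontrivial algebraic coefficients to collapse everything onto the $q=0$ row, and deduce the coproduct statement from naturality of the block-diagonal inclusions. (Two cosmetic differences: the paper computes $H^\pt(N_g(r);\RR)$ via a second Hochschild--Serre sequence for the central extension rather than via Lie algebra cohomology, and it never needs the extension $N_g(r)\to P_g(r)\to\GL(g-r,\ZZ)$ to split, so your ``up to finite index \dots semidirect product'' caveat is unnecessary.)

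The gap is precisely the point you defer at the end. Borel's theorem disposes of the nontrivial irreducible constituents of $H^q(N_g(r);\RR)$ uniformly, so the real issue is not their ``vanishing range'' but rather showing that the \emph{trivial} representation of $\SL^{\pm 1}(g-r,\RR)$ does not occur in $H^q(N_g(r);\RR)$ for $0<q<\half(g-r-1)$; if it did, the rows $q>0$ would contribute copies of $H^\pt(\GL(g-r,\ZZ);\QQ)$ and the edge map would fail to be an isomorphism. You flag this as an anticipated obstacle but do not resolve it. The paper settles it by classical invariant theory: the $E_2$-term $\wedge^s\Hom(I^{g-r}\otimes H^r,\RR)\otimes\wedge^t\Hom(\sym_2 I^{g-r},\RR)$ computing $H^{s+t}(N_g(r);\RR)$ is a quotient of the tensor algebra generated by $\Hom(I^{g-r},\RR)$ alone (only the dual standard representation appears, never the standard one), and the only $\SL(g-r,\RR)$-invariants in that tensor algebra come from powers of the determinant $\wedge^{g-r}\Hom(I^{g-r},\RR)$, hence require degree at least $g-r$ in $\Hom(I^{g-r},\RR)$. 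Since that degree here is $s+2t\le 2(s+t)=2q<g-r-1$, no trivial summand occurs in positive degree in the relevant range. Without this (or an equivalent) argument your proof is incomplete; the remainder of your outline, including the functoriality argument for the coproduct, is sound.
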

\begin{proof}
According to Borel \cite{borel:stable2}, the cohomology of the arithmetic group $\GL(r, \ZZ)$ with values in an irreducible representation of the underlying algebraic group
$\SLcal_r^{\pm}$ (the group of invertible matrices of determinant $\pm 1$) is zero in degree $<\half(r-1)$, unless the representation is trivial.
Let $N_g(r)$ be the kernel of $P_g(r)\to\GL (g-r, \ZZ)$. This is a nilpotent subgroup whose center, when written additively, may be identified with the symmetric quotient 
$\sym_2(I^{g-r})$ of $I^{g-r}\otimes I^{g-r}$. The quotient of $N_g(r)$ by this center is  abelian, and when written additively, naturally identified with the lattice $H^r\otimes I^{g-r}$. So in view of the Leray spectral sequence 
\[
H^p(\GL(g-r, \ZZ), H^q(N_g(r), \RR))\Rightarrow H^{p+q}(P_g(r), \RR),
\]
it suffices to show that $H^q(N_g(r); \RR)$  does not contain the trivial representation of $\SL^{\pm 1}(g-r, \RR)$  in  positive degree $q<\half(g-r-1)$. This follows from another Leray spectral sequence 
\[
H^s(I^{g-r}\otimes H^r, H^t(\sym_2 I^{g-r}, \RR)) \Rightarrow  H^{s+t}(N_g(r), \RR).
\]
The left hand side is isomorphic to  
\[
\wedge^s\Hom (I^{g-r}\otimes H^r, \RR)\otimes \wedge^t\Hom(\sym_2 I^{g-r},\RR)
\] 
as a representation  of $\SL^{\pm 1}(g-r,\RR)$. The invariant theory of $\SL (g-r; \RR)$ tells us that the trivial representations
in the tensor algebra generated by $\Hom (I^{g-r}, \RR)$  come from the formation of powers of the determinant 
$\wedge^{g-r}\Hom (I^{g-r}, \RR)\cong\RR$ (see for example \cite{fultonharris}). Since the displayed  representation  of
$\SL^{\pm 1}(g-r,\RR)$  is a quotient of this tensor algebra, it  will not contain the trivial representation when $0<s+2t<g-r$. Hence the first part lemma follows. The second assertion merely quotes a theorem of Borel \cite{borel:stable1} and the last assertion is easy.
\end{proof}

\begin{corollary}\label{cor:stable2}
For $q<\half(g-r-1)$, $R^q j_{g*}\QQ|\Acal^\bb_r$ is a constant local system whose stalk is canonically isomorphic to $H^q(\GL (\ZZ),\QQ)$. This identification is compatible with the multiplicative structure. It is also compatible with the coproduct in the sense that
when $0\le r'\le g'$, then in degree  $<\half\min\{g-r-1,g'-r'-1\}$,  the natural map $R^{\pt} j_{g+g'{}*}\QQ_{\Acal_{g+g'}}|\Acal^\bb_r\times\Acal^\bb_{r'}\to (R^\pt j_{g*}\QQ_{\Acal_g}|\Acal^\bb_r)\boxtimes (R^{\pt} j _{g'{}*}\QQ_{\Acal_{g'}}|\Acal^\bb_{r'})$ is stalkwise identified  with the coproduct on  $H^\pt(\GL (\ZZ); \QQ)$. 
\end{corollary}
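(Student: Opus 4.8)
The plan is to deduce Corollary \ref{cor:stable2} from Lemma \ref{lemma:stable2} by keeping track of the sheaf-theoretic meaning of each assertion there. First I would recall from the discussion preceding Lemma \ref{lemma:stable1} that on each stratum $\Acal_r\subset\Acal^\bb_g$ the restriction $R^qj_{g*}\QQ_{\Acal_g}|\Acal_r$ is a local system whose stalk is $H^q(P_g(r);\QQ)$, with monodromy given by the algebraic action of $\Spcal(H^r)$ on $P_g(r)$ by conjugation. Lemma \ref{lemma:stable2} says the homomorphism $P_g(r)\to\GL(g-r,\ZZ)$ induces an isomorphism on rational cohomology in degrees $<\half(g-r-1)$, and that target cohomology is the stable $H^\pt(\GL(\ZZ);\QQ)$. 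So in that range of $q$ the stalk is identified with $H^q(\GL(\ZZ);\QQ)$. The point that needs to be checked to get a \emph{constant} local system on the whole $\Acal^\bb_r$ (not just on the open stratum $\Acal_r$) is twofold: (i) the $\Spcal(H^r)$-monodromy on the stalk is trivial in this range; and (ii) the identifications on adjacent strata $\Acal_s$ ($r\le s\le g$) are compatible with the restriction maps, so that the local system extends across $\Acal^\bb_r=\Acal_r\sqcup\cdots\sqcup\Acal_0$ as a constant one.

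For (i): the $\Spcal(H^r)$-action on $H^q(P_g(r);\QQ)$ factors, in the stable range, through the induced action on $H^q(\GL(g-r,\ZZ);\QQ)$; but $\Spcal(H^r)$ acts on $\GL(I^{g-r})=\GL(g-r,\ZZ)$ trivially (it acts only on the $H^r$ factor, which is killed when we pass to the subquotient $I^{g-r}$), hence trivially on $H^q(\GL(\ZZ);\QQ)$. Since $\Spcal(H^r)$ is connected as an algebraic group and the action on the stalk is algebraic, triviality of the induced action on the quotient forces triviality on the stalk itself in that degree range. For (ii): by the description recalled before Lemma \ref{lemma:stable1}, the restriction map $(R^\pt j_{g*}\QQ)_p\to(R^\pt j_{g*}\QQ)_q$ for $p\in\Acal_r$, $q\in\Acal_s$ is induced by the inclusion $P_g(s)\hookrightarrow P_g(r)$, and this inclusion is compatible with the projections to $\GL(g-s,\ZZ)$ and $\GL(g-r,\ZZ)$ (restricting a symplectic transformation and then passing to the relevant $I$-subquotient commutes with the obvious inclusion $\GL(g-s,\ZZ)\hookrightarrow\GL(g-r,\ZZ)$). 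On stable cohomology the map $H^\pt(\GL(g-r,\ZZ);\QQ)\to H^\pt(\GL(g-s,\ZZ);\QQ)$ induced by $\GL(g-s,\ZZ)\hookrightarrow\GL(g-r,\ZZ)$ is the identity on $H^\pt(\GL(\ZZ);\QQ)$ in the stable range. Hence all the stalk identifications glue to a single identification of $R^qj_{g*}\QQ|\Acal^\bb_r$ with the constant sheaf $H^q(\GL(\ZZ);\QQ)$ for $q<\half(g-r-1)$; the multiplicative compatibility is immediate since every map in sight is a ring homomorphism and the identification in Lemma \ref{lemma:stable2} is one of algebras.

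For the coproduct compatibility: I would invoke the last sentence of the discussion before Lemma \ref{lemma:stable1}, which says the natural sheaf homomorphism $R^\pt j_{g+g'{}*}\QQ|\Acal^\bb_g\times\Acal^\bb_{g'}\to R^\pt j_{g*}\QQ\boxtimes R^\pt j_{g'{}*}\QQ$ is, stalkwise over $\Acal_r\times\Acal_{r'}$, induced by the embedding $P_g(r)\times P_{g'}(r')\hookrightarrow P_{g+g'}(r+r')$. This embedding is compatible with the three projections to general linear groups: the composite $P_g(r)\times P_{g'}(r')\hookrightarrow P_{g+g'}(r+r')\to\GL(g+g'-r-r',\ZZ)$ is the block-sum map $\GL(g-r,\ZZ)\times\GL(g'-r',\ZZ)\to\GL(g+g'-r-r',\ZZ)$, since $I^{g+g'-r-r'}=I^{g-r}\oplus I^{g'-r'}$ as a subquotient. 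On stable cohomology the block-sum map $\GL(\ZZ)\times\GL(\ZZ)\to\GL(\ZZ)$ induces precisely the coproduct of the Hopf algebra $H^\pt(\GL(\ZZ);\QQ)$ (this is the standard description of that coproduct, and is exactly the content of the last assertion of Lemma \ref{lemma:stable2}). Therefore, in degrees $<\half\min\{g-r-1,g'-r'-1\}$ the sheaf map in question is stalkwise the coproduct, as claimed. The main obstacle is the rigidity argument in (i)---that triviality of the $\Spcal(H^r)$-action on the stable quotient forces triviality on the stalk---which hinges on the action being algebraic on a connected group together with the injectivity supplied by Lemma \ref{lemma:stable2} in the stated range; once this is in hand, everything else is bookkeeping with the explicit group-level maps.
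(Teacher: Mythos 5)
Your proposal is correct and is exactly the intended deduction: the paper states this as a corollary with no written proof, and the argument is precisely to combine the stalk description via $P_g(s)$ and the restriction/product maps given before Lemma \ref{lemma:stable1} with the three assertions of Lemma \ref{lemma:stable2}. One cosmetic slip: for constancy on $\Acal^\bb_r=\Acal_r\sqcup\cdots\sqcup\Acal_0$ the relevant generization maps are between strata $\Acal_s\subset\overline{\Acal_{s'}}$ with $s\le s'\le r$ (induced by $P_g(s')\hookrightarrow P_g(s)$), not the range $r\le s\le g$ you quote from the paper's general discussion; since $g-s-1\ge g-r-1$ for $s\le r$, the bound $q<\half(g-r-1)$ is exactly what makes your argument apply on every stratum of $\Acal^\bb_r$.
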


\begin{proof}[Proof of the first assertion of Theorem \ref{thm:main}]
We have shown (Lemma \ref{lemma:stable1} and Corollary \ref{cor:stable2}) that  when $p<r$ and $q<\half(g-r-1)$ we have 
\[
E^{p,q}_2=H^p(\Acal_g^\bb, R^qj_{g*}\QQ)=H^p(\Acal^\bb_r, \QQ)\otimes H^q(\GL(\ZZ); \QQ)
\]
The Leray spectral sequences (\ref{eqn:ss}) for $j_{g*}$ and $j_{g+1 *}$ are compatible and so we may form a limit: we fix $p$ and $q$, but we let
$r$ and $g-r$ tend to infinity. This then yields a spectral sequence
\begin{equation}\label{eqn:ssinfty}
E^{p,q}_2=H^p(\Acal_\infty^\bb;\QQ)\otimes H^q(\GL(\ZZ); \QQ) \Rightarrow H^{p+q}(\Acal_\infty; \QQ).
\end{equation}
This spectral sequence is not just multiplicative,  but also compatible with the coproduct. So the differentials take primitive elements to primitive elements
(or zero) and the spectral sequence restricts to one of graded vector spaces by restricting to the primitive parts. The primitive part of $E^{p,q}_2$ is zero unless $p=0$ or $q=0$.  A theorem of Borel \cite{borel:stable1} tells us that $H^\pt(\GL(\ZZ); \QQ)_\pr$ has for every positive integer $r$ a generator $a_r$ in degree $4r+1$ (and is zero in all other positive degrees) and  that $H^\pt(\Acal_\infty; \QQ)_\pr$ has for every odd integer $s$ a primitive 
generator $ch_s$ in degree $2s$ (and is zero in all other positive degrees).  This implies that $d^k(1\otimes a_r)=0$ for $k=2,3 ,\dots 4r+1$, but that 
$y_r:= d^{4r+2}(1\otimes a_r)$ will be nonzero and primitive. We also see  that for $s>0$ odd, $H^{2s}(\Acal_g^\bb; \QQ)$ must contain a lift 
$\widetilde{ch}_s$ of $ch_s$. Since  $H^\pt(\Acal_\infty^\bb;\QQ)$ is a Hopf algebra, it then follows that the Hopf algebra $H^\pt(\Acal_\infty^\bb;\QQ)$ is primitively generated by  $y_1,y_2, \dots ,\widetilde{ch}_1,\widetilde{ch}_3, \widetilde{ch}_5, \dots$. So as a commutative $\QQ$-algebra it is freely generated by $y_1,y_2, \dots ,\tilde c_1,\tilde c_3, \tilde c_5, \dots$. 
\end{proof}

The spectral sequence (\ref{eqn:ssinfty}) suggests that the space $\Acal_\infty$ (which we did not define) has the homotopy type of a $B\!\GL(\ZZ)$-bundle over $\Acal^\bb_\infty$ (which we did not define either). Indeed, Charney and Lee provide in Thm.\ 3.2 of \cite{charneylee:satake} an appropriate homotopy substitute for such a fibration (which they attribute to Giffen), namely, a homotopy fibration whose fiber  is a model of $B\!\GL(\ZZ)^+$  (where `${}^+$' is the Quillen plus construction) and whose the total space is $\QQ$-homotopy equivalent to  $B\!\Sp(\ZZ)^+$,  so that the base
(which admits an explicit  description as the classifying space of a category) may be regarded as a  
$\QQ$-homotopy type representing  $\Acal^\bb_\infty$.

\begin{remark}\label{rem:}
The long exact sequence for the pair $(\Acal^\bb_g, \Acal_g)$ shows that the cohomology $H^\pt(\Acal^\bb_g, \Acal_g; \QQ)$ stabilizes as well with $g$
and is equal to the ideal in $\QQ[y_1,y_2, \dots ,\tilde c_1, \tilde c_3, \tilde c_5, \dots]$ generated by the $y_r$'s. We shall therefore denote this ideal by
$H^\pt(\Acal^\bb_\infty, \Acal_\infty; \QQ)$. We use the occasion to  point out that  the $y$-classes are canonically defined, but that this is not at all clear for the $\tilde c$-classes (for more on this, see Remark \ref{rem:tate2}).
\end{remark}

\begin{remark}\label{rem:classconstruction}
We can account geometrically  for the classes $y_r$  as follows. Denote the single point of $\Acal_0\subset \Acal_g^\bb$ by $\infty$ (the worst cusp of $\Acal_g^\bb$),  and take $g$ so large that the natural  maps  $H^{4r+1}(\GL (\ZZ); \QQ)\to H^{4r+1}(\GL (g, \ZZ); \QQ)\to (R^{4r+1} j_{g*}\QQ))_\infty$ and
$H^{4r+2}(\Acal_\infty^\bb,\Acal_\infty; \QQ)\to H^{4r+2}(\Acal_g^\bb,\Acal_g; \QQ)$ are isomorphisms.
Choose a regular neighborhood $U_\infty$ of $\infty$ in $\Acal^\bb_g$ so that if we put $\mathring{U}_\infty:=U_\infty\cap \Acal_g$, the natural maps 
\[
(R^{4r+1} j_{g*}\QQ)_\infty\leftarrow H^{4r+1}(\mathring{U}_\infty; \QQ)\xrightarrow{\delta} H^{4r+2}(U_\infty, \mathring{U}_\infty; \QQ)
\]
are also isomorphisms.  
If we identify  $a_r\in H^{4r+1}(\GL (\ZZ); \QQ)$ with its image in  $H^{4r+1}(\mathring{U}_\infty; \QQ)$, then  $\delta (a_r)\in H^{4r+2}(U_\infty,\mathring{U}_\infty; \QQ)$ is precisely the image  of $y_r$ under the restriction map 
$H^{4r+2}(\Acal_\infty ^\bb,\Acal_\infty; \QQ)\cong H^{4r+2}(\Acal_g^\bb,\Acal_g; \QQ) \to H^{4r+2}(U_\infty,\mathring{U}_\infty; \QQ)$.  

We may also get a homology class this way: the 
Hopf algebra  $H_\pt(\GL (\ZZ); \QQ)$ has a primitive generator in $H_{4r+1}(\GL (\ZZ); \QQ)$ that is dual to $a_r$ and if we represent 
this generator as $(4r+1)$-cycle $B_r$ in $\mathring{U}_\infty$, then $B_r$ bounds both in $U_\infty$ (almost canonically) and in $\Acal_g$ (not canonically). The two bounding $(4r+2)$-chains make up
a $(4r+2)$-cycle in $\Acal_g^\bb$ whose class $z_r\in H_{4r+2}(\Acal^\bb_g; \QQ)$ pairs nontrivially with the image of $y_r$ in 
$H^{4r+2}(\Acal^\bb_g; \QQ)$. 
\end{remark}

\section{The mixed Hodge structure on the primitive stable cohomology}

\begin{proof}[Proof that the $y$-classes  are of weight zero]
In view of Remark \ref{rem:classconstruction} it is enough to show that the image of $H^{\pt}(\GL (\ZZ); \QQ)$ in the stalk $(R^{\pt}j_{g*}\QQ)_\infty$ 
has weight zero.  For this we will need a toroidal resolution of $U_\infty$ as described in \cite{amrt}, but we will try to get by with the minimal 
input necessary (for a somewhat more  detailed review of this construction one may consult \cite{chen-looijenga}).  

Consider the symmetric quotient $\sym_2 \ZZ^g$ of $\ZZ^g\otimes \ZZ^g$ and regard it  as a lattice in the space $\sym_2 \RR^g$ of  quadratic forms on 
$\Hom(\ZZ^g, \RR)$.  The positive definite quadratic forms  make up a cone $C_g\subset \sym_2 \RR^g$ that is open and  convex and  is as 
such spanned by its intersection with $\sym_2 \ZZ^g$. Let $C_g^+\supset C_g$ be the convex cone spanned by 
$\overline C_g\cap \sym_2 \ZZ^g$; this  is just the set of  semipositive  quadratic forms on $\Hom(\ZZ^g, \RR)$ whose kernel is spanned  by 
its intersection with $\Hom(\ZZ^g, \ZZ)$. The obvious action of $\GL (g, \ZZ)$ on $\sym_2 \ZZ^g$ preserves both  cones and is proper on $C_g$. 

Consider the algebraic torus $T_g:=\CC^\times\otimes _\ZZ \sym_2 \ZZ^g$. If we apply the `log norm'   
$\lognrm: z\in \CC^\times\mapsto \log |z|\in \RR$ to the first tensor factor, we get a $\GL(g, \ZZ)$-equivariant homomorphism 
$\lognrm_{T_g}: T_g\to \sym_2 \RR^g$ with kernel the compact torus $U(1)\otimes _\ZZ \sym_2 \ZZ^g$ . We denote by $\Tcal_g\subset T_g$ the preimage of $C_g$ so that we have defined a proper
$\GL(g, \ZZ)$-equivariant homomorphism of semigroups $\lognrm_{\Tcal_g}: \Tcal_g\to C_g$. Since $\GL (g, \ZZ)$ acts properly on 
$C_g$ it does so 
on $\Tcal_g$ and hence the orbit space $\mathring{V}:=\GL (g, \ZZ)\bs \Tcal_g$  has the structure of a complex-analytic orbifold. 
There is a  
natural extension of $V\supset\mathring V$ in the complex analytic category  (it is in fact the Stein hull of $\mathring V$ in case $g>1$) that comes 
with a distinguished point that we will (for good reasons) also denote by $\infty$ and which is such  that $\mathring V$  is open-dense in $V$ and 
$(V,V\ssm\mathring V)$ is topologically the open cone over a pair of spaces  with vertex $\infty$. It has the property that
there exists an open embedding of  $U_\infty$ in $V$ that takes $\infty$ to $\infty$ and identifies  $U_\infty$ with a regular neighborhood of $\infty$ in $V$ in such a way that  $\mathring U_\infty=U_\infty\cap \mathring V$. This  justifies that we focus on the triple $(V, \mathring V;\infty)$. All else we need to know about $V$ is that the toroidal extension of $\mathring V$ that we  are about to consider provides a resolution of $V$ as an orbifold.

The universal cover of $\Tcal_g$ is contractible (with covering group $\sym_2 \ZZ^g$) and hence the 
universal cover of $\mathring V$ as an orbifold is also contractible and has  covering group $\GL (g, \ZZ)\ltimes \sym_2 \ZZ^g$ (it is in fact a virtual classifying space for this group). Similarly, the orbit space, $\Ical_g:=\GL(g, \ZZ)\bs C_g$ exists as a real-analytic orbifold and is a virtual classifying space for $\GL(g, \ZZ)$. The map
$\lognrm_{T_g}$ induces a projection $\nu: \mathring V\to \Ical_g$ and the classes that concern us lie in the image of
\begin{equation}\label{eqn:classmap}
H^\pt(\GL(\ZZ); \QQ)\to H^\pt(\GL(g, \ZZ); \QQ)\to H^\pt (\Ical_g; \QQ)\xrightarrow{\nu^*} H^\pt (\mathring V; \QQ).
\end{equation}

A \emph{nonsingular admissible decomposition} of $C_g^+$ is a collection $\{\sigma\}_{\sigma\in\Sigma}$ of closed cones in $C_g^+$, each of which
is spanned by a partial basis of $\sym_2 \ZZ^g$, such that the collection is closed under `taking faces' and `taking intersections' and  whose relative interiors are pairwise disjoint with union $C_g^+$.  Let $\Sigma$  be such a decomposition  that is also $\GL(g, \ZZ)$-invariant and is fine enough in the sense that every $\GL (g, \ZZ)$-orbit in $C^g_+$ meets every member of $\Sigma$ in at most one point. Such decompositions exist \cite{amrt}. (One usually also requires that $\GL (g, \ZZ)$ has only finitely many orbits in 
$\Sigma$, but this is in fact implied by the other conditions, see \cite{looijenga:cone}.)
The associated torus embedding $T_g\subset T_g^\Sigma$ is then nonsingular and comes with an action of $\GL(g, \ZZ)$. We denote by $\Tcal_g^\Sigma$ the interior of the closure of $\Tcal_g$ in $T_g^\Sigma$. This is an open $\GL(g, \ZZ)$-invariant  subset of $ T_g^\Sigma$ on which $\GL(g, \ZZ)$ acts properly 
so that $V^\Sigma:=\GL(g, \ZZ)\bs \Tcal_g^\Sigma$ exists as an analytic orbifold. It is of the type alluded to above: we have a natural proper morphism 
$f: V^\Sigma\to V$ that is complex-algebraic over $V$ and is an isomorphism over $\mathring V$. Moreover, the exceptional set is a simple normal crossing divisor in the orbifold sense. 

As for every  torus embedding there is also a real counterpart in the sense that $\lognrm_{\Tcal_g}$ extends in a $\GL(g, \ZZ)$-equivariant manner to a proper and surjective map $\lognrm_{\Tcal_g^\Sigma}: \Tcal_g^\Sigma\to C_g^\Sigma$, where $C_g^\Sigma$ is a certain stratified locally compact Hausdorff space which contains $C_g$ as an open dense subset. In the present case $C_g^\Sigma$ is simply a manifold with corners, because $\Sigma$ is nonsingular. 
The strata of $C_g^\Sigma$ are indexed by $\Sigma$, with the stratum defined by $\sigma$ being the image of $C_g$ under the projection along the real subspace of $\sym_2 \RR^g$ spanned by $\sigma$. So each stratum of $C_g^\Sigma$ appears as a convex open subset of some vector space and it is all of this vector space precisely when the relative interior of $\sigma$ is contained $C_g$. This is also equivalent to the stratum having compact closure in $C_g^\Sigma$. 

Let us call a \emph{wall} of $C_g^\Sigma$, the closure of a stratum defined by a ray ($=$ a one-dimensional member) of $\Sigma$. So a wall is compact if and only if the
associated ray lies in $C_g\cup\{ 0\}$. We denote by $\partial_\pr C_g^\Sigma$ the union of these compact walls. This is a closed subset of 
$C_g^\Sigma$ and its covering by such compact walls is a \emph{Leray covering}: the covering is locally finite and each nonempty intersection is contractible (and is in fact the closure of a  stratum). Its nerve is easily  expressed  in terms of $\Sigma$. Let us say that a member of $\Sigma$ is \emph{proper} if it is contained in $C_g\cup\{ 0\}$.  The proper members of $\Sigma$  make up a subset $\Sigma_\pr\subset \Sigma$ that is also closed under `taking faces' and `taking intersections' and their union makes up a $\GL (g, \ZZ)$-invariant cone contained in $C_g\cup \{0\}$. If we projectivize that cone we get a simplicial complex in the real projective space of $\sym_2 \RR^g$ that we  denote by $P(\Sigma_\pr)$. A vertex of $P(\Sigma_\pr)$  corresponds of course to a ray of $\Sigma_\pr$ and this in turn defines a compact wall of $C_g^\Sigma$. In this way $P(\Sigma_\pr)$  can be identified in a $\GL(g, \ZZ)$-equivariant manner with the nerve complex of the covering of $\partial_\pr C_g^\Sigma$ by the compact walls of $C_g^\Sigma$.  A standard argument shows that we have a $\GL(g, \ZZ)$-equivariant homotopy equivalence between $\partial_\pr C_g^\Sigma$ and the nerve $P(\Sigma_\pr)$ of this covering. 

Each stratum closure in $C_g^\Sigma$ can be retracted in a canonical manner onto its intersection with $\partial_\pr C_g^\Sigma$ and we thus 
find a $\GL(g, \ZZ)$-equivariant deformation retraction $C_g^\Sigma \to \partial_\pr C_g^\Sigma$. This shows at the same time that the 
inclusion  $C_g\subset C_g^\Sigma$ is a $\GL(g, \ZZ)$-equivariant homotopy equivalence. So if we put $\Ical_g^\Sigma:=\GL(g, \ZZ)\bs C_g^\Sigma$ and
$\partial_\pr\Ical_g^\Sigma:=\GL(g, \ZZ)\bs\partial_\pr C_g^\Sigma$, then we end up with  homotopy equivalences
$\Ical_g\subset \Ical_g^\Sigma\supset \partial_\pr\Ical_g^\Sigma$. We also have a homotopy equivalence $\partial_\pr\Ical_g^\Sigma\sim \GL(g, \ZZ)\bs P(\Sigma_\pr)$.

Taking the preimage under $\lognrm$ makes walls of $C_g^\Sigma$ correspond to irreducible components of the toric boundary 
$\Tcal_g^\Sigma\ssm\Tcal_g$ and a wall of $C_g^\Sigma$ is compact if and only if the associated  irreducible component is.
So the preimage $\partial_\pr \Tcal_g^\Sigma$ of $\partial_\pr C_g^\Sigma$ is the union of the compact irreducible components 
of the toric boundary. It is clear that  $P(\Sigma_\pr)$ is also the nerve of the  covering  of $\partial_\pr \Tcal_g^\Sigma$ by its irreducible components.
The image of $\partial_\pr \Tcal_g^\Sigma$ in $V$ (in other words, its $\GL(g, \ZZ)$-orbit space) is the normal crossing divisor $f^{-1}(\infty)$.
The inclusion $f^{-1}(\infty)\subset V^\Sigma$  is also a deformation retract. So in the commutative diagram
\[
\begin{tikzcd}
\mathring{V}\arrow[two heads]{d} \arrow[hook]{r}
& V^\Sigma \arrow[two heads]{d}\arrow[hookleftarrow]{r}  
& f^{-1}(\infty) \arrow[two heads]{d}\\
\Ical_g \arrow[hook]{r} & \Ical^\Sigma_g \arrow[hookleftarrow]{r}  & \partial_\pr \Ical ^\Sigma
\end{tikzcd}
\]
the inclusion on the top right and those at the bottom are homotopy equivalences. It follows that the composite map in diagram (\ref{eqn:classmap}) factors through the rational cohomology of 
$\Ical_g^\Sigma$ and hence also through the rational cohomology of $V^\Sigma$ and that the nonzero classes in  
$H^\pt(V^\Sigma; \QQ)\cong H^\pt(f^{-1}(\infty); \QQ)$ that we thus obtain come from
the nerve of  the covering of $f^{-1}(\infty)$ by its irreducible components. Such classes are known to be of weight zero \cite{deligne:hodge3}.
\end{proof}

\begin{remark}\label{rem:tate2}
Goresky and Pardon \cite{goresky-pardon} have constructed a lift $c_r^\bb$ of the \emph{real} Chern class  $c_r\in H^{2r}(\Acal_g; \RR)$ to $H^{2r}(\Acal^\bb_g; \RR)$. The second author \cite{looijenga} recently proved that $c_r^\bb$ (and hence also the corresponding Chern character $ch_r^\bb$) lies in  $F^rH^{2r}(\Acal^\bb_g; \RR)$.  So the class of the Tate extension in Remark \ref{rem:tate1}  is up to a rational number given by  the value of $c^\bb_{2r+1}$  on the class $z_r\in H_{4r+2}(\Acal_g^\bb; \QQ)$ found in Remark \ref{rem:classconstruction} (two choices of $z_r$ differ by a class of the form $j_{g *}(w)$ with $w\in H_{4r+2}(\Acal_g; \QQ)$ and $c^\bb_{2r+1}$ takes on such a class the rational value $c_{2r+1}(w)$). Arvind Nair, after learning of our theorem, informed us that his techniques enable him to show that this extension class is nonzero. Subsequently a different proof (based on the Beilinson regulator)  was given in  \cite{looijenga}.
\end{remark}

\end{document}